\newtheorem{thm}{Theorem}
\newtheorem{lem}[thm]{Lemma}
\newtheorem{cor}[thm]{Corollary}
\newtheorem{prop}[thm]{Proposition}
\newtheorem{conj}[thm]{Conjecture}
\theoremstyle{definition}
\newtheorem{defn}[thm]{Definition}
\newtheorem{say}[thm]{}
\newtheorem{exmp}[thm]{Example}
\newtheorem{ques}[thm]{Question}    
\newtheorem{rem}[thm]{Remark}          
\newtheorem*{ack}{Acknowledgments}      
\newtheorem{defn-thm}[thm]{Definition--Theorem}  
\newtheorem{defn-lem}[thm]{Definition--Lemma}  
\theoremstyle{remark}
\renewcommand{\c}[0]{{\mathbb C}}  
\renewcommand{\o}[0]{{\mathcal O}} 
\newcommand{\z}[0]{{\mathbb Z}}
\newcommand{\p}[0]{{\mathbb P}}
\newcommand{\q}[0]{{\mathbb Q}}
\newcommand{\qtq}[1]{\quad\mbox{#1}\quad}
\newcommand{\red}[0]{\operatorname{red}}
\newcommand{\sing}[0]{\operatorname{Sing}}
\newcommand{\cl}[0]{\operatorname{Cl}}
\newcommand{\tsum}[0]{\textstyle{\sum}}
\def\into{\DOTSB\lhook\joinrel\to}
\newcommand{\CC}{{\mathbb{C}}}
\newcommand{\PP}{{\mathbb{P}}}
\newcommand{\SSS}{{\mathbb{S}}}
\begin{document}
\bibliographystyle{amsalpha}


\title{Homotopically trivial deformations}
\author{Javier Fernandez de Bobadilla}
\author{J\'anos Koll\'ar}

\maketitle

The aim of this note is to call attention to a question
about deformations that are homotopically trivial. First we need a definition.

 \begin{defn} \label{homot.fibbund.defn}
A proper morphism of complex spaces $f:X\to Y$
is called a {\it homotopy fiber bundle} if $Y$ has an open cover
$Y=\cup U_i$ such that  for every $i$ and for every $y\in U_i$ the inclusion
$$
f^{-1}(y)\into  f^{-1}(U_i)\qtq{is a homotopy equivalence.}
$$
For every $y\in Y$ there is an open neighborhood
$y\in U_y$ such that $f^{-1}(y) $ is a deformation retract
of $f^{-1}(U_y) $. 
Choose a retraction $r_y: f^{-1}(U_y)\to f^{-1}(y) $.
Thus $f^{-1}(y)\into  f^{-1}(U_y) $
is a homotopy equivalence and so $f$ is a homotopy fiber bundle
iff for every $y\in Y$ and  $y'\in U_y$ the induced map
$$
r_{y'\to y}: f^{-1}(y')\to f^{-1}(y)\qtq{is a homotopy equivalence.}
$$
Similarly, if $R$ a  commutative ring, then $f:X\to Y$
is called an {\it $R$-homology fiber bundle} if
$$
H_*\bigl(f^{-1}(y), R\bigr)\to H_*\bigl(f^{-1}(U_i), R\bigr)
\qtq{is an isomorphism.}
$$
As above, these conditions hold iff  the
retraction maps $r_{y'\to y}$ induce isomorphisms
$$
\bigl(r_{y'\to y}\bigr)_*: 
H_*\bigl(f^{-1}(y'), R\bigr)\to H_*\bigl(f^{-1}(y), R\bigr).
$$
We are mostly interested in cases when the fibers of $f$ are
irreducible. 

If the fibers are reducible,  some pathological  cases
are shown by Example \ref{exmp.nonormal.2}.
To avoid these, 
one should also assume that
the image of the fundamental class $\bigl[f^{-1}(y)\bigr] $
in $H_*\bigl(f^{-1}(U_i), R\bigr) $ 
is independent of $y$. Equivalently, the
retraction 
$r_{y'\to y}$ maps $\bigl[f^{-1}(y')\bigr] $  to  $\bigl[f^{-1}(y)\bigr] $.

All the examples of $\z$-homology fiber bundles that we know
are also homotopy  fiber bundles but being a $\q$-homology fiber bundle
is a much weaker property.
\end{defn}

The main problem we want to consider is the following.

\begin{ques} Let  $f:X\to Y$ be a homotopy or $\z$-homology fiber bundle.
Under what conditions is it a topological or differentiable fiber bundle?
\end{ques}

If $X$ is non-normal, it is easy to give examples of homotopy fiber bundles 
$f:X\to Y$ where not all fibers  $f^{-1}(y) $ are  
homeomorphic to each other, see Example~\ref{exmp.nonormal}.
In the first version of \cite{k-p} it was asked whether
 the answer was positive for normal spaces. We show in 
 Example \ref{main.counter.exmpp} that
 this is not the case. 
However, we still hope that for smooth varieties
the situation is as nice as possible.

\begin{conj}\label{smooth.conj}
 Let  $f:X\to Y$ be a homotopy or $\z$-homology fiber bundle
such that $X$ is smooth.
Then $f$ is smooth hence  $f:X\to Y$ is  a  differentiable fiber bundle.
\end{conj}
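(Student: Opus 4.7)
The overall plan is to show that $f$ is a smooth morphism and then invoke the complex-analytic Ehresmann theorem. The strategy is to extract from the local topology at any critical point of $f$ an obstruction to being a $\ZZ$-homology fiber bundle.

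First I would reduce to the case where $Y=\Delta\subset \CC$ is a small disk. Smoothness of $f$ at a point $x_0$ is local on $X$, and restricting along a generic holomorphic curve germ in $Y$ through $f(x_0)$ preserves the homology fiber bundle property after suitable shrinking. Next I would show that all fibers of $f$ are equidimensional: fiber dimension is upper semicontinuous, and a jump would change the top cohomological dimension of the fibers, violating the homology fiber bundle condition. Combined with $X$ smooth and $Y$ a disk, the miracle flatness theorem then yields flatness of $f$.

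The key step is to show that every fiber of $f$ is smooth. Suppose for contradiction that $x_0\in X_0:=f^{-1}(0)$ is a singular point of $X_0$. Since $X$ is smooth and $X_0\subset X$ is locally the vanishing of the single function $f$, the point $x_0$ is a hypersurface singularity of $f$. For an isolated critical point, Milnor's fibration theorem gives that the local Milnor fiber $F_{x_0}$ is a bouquet of spheres in middle dimension with non-trivial reduced homology; for non-isolated critical loci one would use nearby cycles, L\^e's attaching theorem, or A'Campo's Euler characteristic formula. In either case, classes in the vanishing cycle sheaf $\psi_f \ZZ_X$ supported near $x_0$ are killed by passing from a smooth nearby fiber $X_t$ to $X_0$, so the retraction $r_{t\to 0}\colon X_t\to X_0$ cannot induce an isomorphism on homology, contradicting $f$ being a $\ZZ$-homology fiber bundle.

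With equidimensional smooth fibers, $f$ is a smooth morphism, and properness plus Ehresmann's theorem give the differentiable fiber bundle conclusion. The main obstacle is the non-isolated critical case in the key step: one must show that $\psi_f \ZZ_X$ is globally non-zero and, more importantly, that the local vanishing classes assemble into a genuine \emph{global} obstruction that is not cancelled by contributions from other critical components. The $\QQ$-homology analogue would be strictly more delicate, consistent with the paper's remark that it is a weaker property. A secondary difficulty is to ensure the reduction to $Y=\Delta$ actually detects critical behaviour when the critical locus of $f$ is vertical over a positive-dimensional stratum of $Y$; the transverse curve in $Y$ must then be chosen with care so that the restricted family still exhibits the singular fiber.
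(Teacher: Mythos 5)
The statement you are trying to prove is Conjecture~\ref{smooth.conj}; the paper offers no proof of it, only partial results, so there is nothing to match your argument against --- the relevant question is whether your outline actually closes the problem, and it does not. Your first two steps are sound and coincide with what the paper already establishes: the reduction to $\dim Y=1$ is Proposition~\ref{red.to.dim.1} (done there via simultaneous normalization rather than generic curve sections, but restricting the retractions $r_{y'\to y}$ to a curve does preserve the fiber bundle property), and equidimensionality plus miracle flatness is Corollary~\ref{flat.cor}. Your key step is also the strategy the authors clearly have in mind: Proposition~\ref{localize.nonisol} (via A'Campo's theorem on the Lefschetz number of the monodromy) is exactly the \emph{local} statement that for $X$ smooth, the local retraction near $p$ being an $R$-homology equivalence forces $f$ to be smooth at $p$; and Proposition~\ref{localize.prop} plus the nonvanishing of the Milnor number settles the case where $X_0$ has only isolated singularities.

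The genuine gap is the passage from the global hypothesis to the local conclusion when the critical locus of $f$ in $X_0$ is positive-dimensional. The hypothesis of the conjecture is that the \emph{global} retraction $r_t:X_t\to X_0$ is a homology equivalence; to apply A'Campo or a vanishing-cycle count at a point $p$ you need the \emph{local} retraction $X_t\cap B\to X_0\cap B$ to be one. Proposition~\ref{localize.prop} proves this local-to-global equivalence only for isolated singularities of $X_0$, by excision on the pair $\bigl(X_t,\cup_i M_{i,t}\bigr)$; the paper explicitly warns that ``there are global issues for non-isolated singularities,'' and Example~\ref{main.counter.exmpp} shows that for normal (non-smooth) $X$ the local contributions really can fail to obstruct the global equivalence. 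Your sentence asserting that local vanishing classes ``are killed by passing from $X_t$ to $X_0$, so the retraction cannot induce an isomorphism on homology'' is precisely the unproved assertion: the stalks of $\psi_f\ZZ_X$ over a positive-dimensional critical locus contribute to the global comparison through a spectral sequence in which cancellation is not excluded. You correctly identify this as ``the main obstacle,'' but you offer no mechanism to overcome it, so the proposal is an outline of the known strategy rather than a proof; the statement remains, as the paper presents it, a conjecture.
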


A stronger version, more closely related to deformation theory
is formulated as Conjecture \ref{smooth.conj.V}.

\begin{say}[Origin of the conjecture]\label{origin.say} 
We were led to this question
by the study of  universal covers of projective varieties.
Their modern study  was  initiated by
 Shafarevich \cite[Sec.IX.4]{shaf};
see \cite{shaf-book, bog-kat, nakayama,  ekpr, chk} and the references there
for recent results and surveys.
One aim of these investigations is to understand  projective varieties
whose universal cover is ``simple.''
There are  many  ways to define what ``simple'' should mean;
here we focus on a topological variant considered in \cite{k-p}.

{\it Question \ref{origin.say}.1.} Describe   projective varieties $X$ whose
 universal cover $\tilde X$ is 
homotopic to a finite CW complex.

This seems to be  a rather difficult problem in general,
so here we consider a series of special cases
 that seem especially important for
applications.

Let  $X$ be a smooth projective variety  and  $f:X\to Y$  a surjective
morphism.  Let $\tilde Y\to Y$ denote 
 the universal cover.
By pull-back we obtain  $\tilde f:\tilde X\to \tilde Y$.
In light of  \cite{k-p} the following seems quite plausible.

{\it Question \ref{origin.say}.2.} Assume that $\tilde Y$ is
contractible and $\tilde X$ is 
homotopic to a finite CW complex. Does this imply that
$f$ is a homotopy fiber bundle?

Conversely, if $f$ is a homotopy fiber bundle and $\tilde Y$ is 
homotopic to a finite CW complex then most likely
$\tilde X$ is 
homotopic to a finite CW complex.
Thus if Conjecture \ref{smooth.conj} is true then
we would have a rather complete understanding of 
when a variety $X$ with a nontrivial morphism $X\to Y$
has a ``simple'' universal cover.
\end{say}

\begin{say}[First properties]
If  $f:X\to Y$
is a $\q$-homology fiber bundle then all fibers
$f^{-1}(y) $ have the same dimension and the same number of
irreducible components. Thus if $X$ is normal then, 
by taking the Stein factorization, we may assume that
all fibers are irreducible.

Assume that $g:S\to C$ is an elliptic surface such that
all reduced fibers are smooth. Then $g$ is a  $\q$-homology fiber bundle
but it is a  $\z$-homology fiber bundle only if there are no multiple fibers.

We see below that there are many $\q$-homology fiber bundles
that are not $\z$-homology fiber bundles (\ref{Q-hom.exmp.1}, \ref{wahl.exmp}).

It is much harder to get nontrivial examples of  $\z$-homology fiber bundles.
For now we note two basic results.
\end{say}

\begin{prop}\label{gen.flat.prop} Let $X,Y$ be normal spaces and $f:X\to Y$
 a $\z$-homology fiber bundle. Then every fiber of $f$  is 
generically reduced and $f$ is smooth
at every smooth point of $\red f^{-1}(y) $ for every $y\in Y$.
\end{prop}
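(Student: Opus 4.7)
The claim is local on $Y$, so I fix $y$ and a smooth point $x \in \red f^{-1}(y)$; since every reduced component has smooth points, generic reducedness of $f^{-1}(y)$ follows once smoothness of $f$ at $x$ is established. My plan is first to reduce to a smooth curve base. Slice $Y$ with a generic smooth analytic disc $D$ through $y$: the deformation retraction of $f^{-1}(U)$ onto $f^{-1}(y)$ restricts (after shrinking) to one of $f^{-1}(D \cap U)$ onto $f^{-1}(y)$, and combined with the hypothesized homology equivalence $f^{-1}(y') \into f^{-1}(U)$ for $y' \in U$, this shows $f|_{f^{-1}(D)} \colon f^{-1}(D) \to D$ is a $\z$-homology fiber bundle near $y$, with $x$ still a smooth point of its reduced central fiber. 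With $Y = D$ a smooth curve, normality of $X$ makes $\calO_{X,x}$ an integral domain; since $f$ is non-constant, $\calO_{X,x}$ is torsion-free and hence flat over the DVR $\calO_{D,y}$, so $f$ is flat at $x$.

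Next, let $F_0$ be the reduced component of $f^{-1}(y)$ through $x$ and let $m$ be the multiplicity of $f^{-1}(y)$ along $F_0$. I argue $m = 1$ by comparing the local and global degrees of the retraction restricted to a nearby fiber $F_{y'} = f^{-1}(y')$ for $y' \in D$ close to $y$. By flatness at $x$, $m$ equals the local topological degree of $f$ at $x$ in the classical sense: a small neighborhood of $x$ meets $F_{y'}$ in a cycle of degree $m$ over the target germ at $y$. On the other hand, the retraction gives a continuous map $r_{y'\to y} \colon F_{y'} \to f^{-1}(y)$ which, by the homology fiber bundle hypothesis together with the fundamental-class condition (required when fibers are reducible), is an integral homology isomorphism preserving fundamental classes; hence its restriction to the irreducible component of $F_{y'}$ dominating $F_0$ has global degree $\pm 1$. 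The local degree of $r_{y'\to y}$ at $x$ is $m$, and the complex-analytic orientation of the fibers ensures no cancellation when globalizing, so $m = 1$. Combined with flatness and the smoothness of $F_0$ at $x$, this gives $f$ smooth at $x$.

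The main obstacle is identifying the multiplicity $m$ with the local degree of $r_{y'\to y}$ at $x$, and ensuring that local degrees at smooth points of $F_0$ globalize without cancellation. When $X$ is smooth at $x$, the nearby fiber decomposes locally into $m$ disjoint smooth sheets, each mapped homeomorphically to $F_0$ by $r_{y'\to y}$, and the argument is transparent. When $X$ is singular at $x$, one must invoke the general equality of flat multiplicity with local topological degree of a holomorphic map. The fundamental-class condition in the definition of $\z$-homology fiber bundle is crucial to rule out the possibility that $m$ is ``spread'' across different components of $F_{y'}$ in a way that preserves total homology while failing the smoothness conclusion fiber-by-fiber.
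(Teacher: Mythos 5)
Your plan is, at bottom, an unpacking of the paper's own proof rather than a different theorem: the paper disposes of the statement in three lines by observing that (after Stein factorization) $f$ is a well-defined family of proper algebraic cycles in the sense of \cite[I.3.9--10]{rc-book} whose fibers all have multiplicity $1$, and then quoting \cite[I.6.5]{rc-book} for both conclusions. The step you isolate as ``the main obstacle'' --- that the flat/cycle-theoretic multiplicity $m$ of $f^{-1}(y)$ along $F_0$ equals the local degree of the specialization map, that local degrees of holomorphic specializations are positive so there is no cancellation, and that the total degree is $1$ because the retraction is an isomorphism on top homology --- is not a lemma you can wave at; it \emph{is} the content of the cited results, i.e.\ essentially the entire proof. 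So the proposal is a correct outline with its crux delegated to an unnamed ``general equality,'' which the paper supplies by citation.

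Two concrete soft spots. First, you invoke ``the fundamental-class condition (required when fibers are reducible),'' but that condition is not among the hypotheses of the Proposition; the definition in the paper lists it only as an optional extra assumption. The paper's actual move is to use normality of $X$ and Stein factorization to reduce to irreducible fibers, where $H_{2d}(f^{-1}(y'),\z)\cong\z$ is generated by the fundamental class and the homology isomorphism forces multiplicity $1$ with no extra hypothesis; you should do the same. Second, your reduction to a disc plus the torsion-free-over-a-DVR argument is airtight only when $\dim Y\le 2$: for $\dim Y\ge 3$ the slice $f^{-1}(D)$ is $X$ cut by at least two functions, which is no longer $S_1$ at $x$, so it may acquire embedded components along $f^{-1}(y)$ and the uniformizer of $D$ need not be a nonzerodivisor on $\calO_{f^{-1}(D),x}$. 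This is repairable (e.g.\ once one knows the scheme-theoretic fiber is regular at $x$ and equidimensional, $\calO_{X,x}$ is regular and miracle flatness applies, which is how \cite[I.6.5]{rc-book} effectively proceeds), but as written the flatness step does not follow from ``$\calO_{X,x}$ is a domain.''
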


Proof. As we noted, we may assume that
all fibers are irreducible. In the terminology of 
 \cite[I.3.9--10]{rc-book}, $f$ is a well defined family of
proper algebraic cycles. Moreover, all fibers have multiplicity 1.
Thus the scheme theoretic fibers are generically reduced 
and $f$ is smooth
at every smooth point of $\red f^{-1}(y) $ for every $y\in Y$
by \cite[I.6.5]{rc-book}.\qed

\begin{cor}\label{flat.cor} Let  $f:X\to Y$ be
 a $\z$-homology fiber bundle where $X$ is smooth and $Y$ is normal.
Then $Y$ is smooth and $f$ is flat with  local complete intersection fibers.
\end{cor}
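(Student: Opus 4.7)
The plan is to bootstrap Proposition \ref{gen.flat.prop} via the miracle flatness criterion, and then read off the lci property from the regularity of $Y$. Fix any $y\in Y$ in the image of $f$; on the connected components of $Y$ disjoint from the image the homotopy fiber bundle condition forces $X$ to be empty, so there is nothing to check there. Proposition \ref{gen.flat.prop} supplies a point $x\in \red f^{-1}(y)$ at which $f$ is smooth. Since $X$ is smooth at $x$, the implicit function theorem lets us choose local holomorphic coordinates on $X$ at $x$ in which $f$ becomes a linear projection onto a coordinate subspace, which identifies $Y$ near $y$ with a smooth complex manifold. Hence $Y$ is smooth.

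To obtain flatness, note that the First properties paragraph already records that every $\q$-homology fiber bundle, and \emph{a fortiori} our $f$, has fibers of one common dimension $d=\dim X-\dim Y$. With $X$ Cohen--Macaulay (in fact smooth), $Y$ regular by the previous step, and $f$ equidimensional of relative dimension $d$, Hironaka's miracle flatness criterion gives that $f$ is flat.

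Finally, for local complete intersection fibers I would fix $y\in Y$ and a regular system of parameters $s_1,\dots,s_k\in \calO_{Y,y}$. By flatness, $f^*s_1,\dots,f^*s_k$ form a regular sequence in $\calO_{X,x}$ for every $x\in f^{-1}(y)$, so the local ring $\calO_{f^{-1}(y),x}=\calO_{X,x}/(f^*s_1,\dots,f^*s_k)$ is cut out by a regular sequence in the regular ring $\calO_{X,x}$, which is exactly the definition of lci. I do not anticipate a real obstacle: the substantive content is already in Proposition \ref{gen.flat.prop}, which converts the homological hypothesis into algebraic smoothness at a point of each fiber. Everything after that is a standard package of smoothness descent, miracle flatness, and the regular-sequence description of flat fibers over a regular base.
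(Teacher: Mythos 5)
Your proof is correct and follows essentially the same route as the paper: deduce smoothness of $Y$ from the smooth point of the fiber provided by Proposition \ref{gen.flat.prop}, then apply miracle flatness (the paper cites \cite[Exrc.III.10.9]{hartsh}) using that $X$ is Cohen--Macaulay, $Y$ regular, and $f$ equidimensional. Your explicit regular-sequence argument for the lci property of the fibers is the standard one and merely spells out what the paper leaves implicit.
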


Proof.  Pick $y\in Y$ and let $x\in \red f^{-1}(y) $ be a smooth point.
Then $f$ is smooth at $x$ by Proposition \ref{gen.flat.prop}.
Since $X$ is smooth at $x$, this implies that $Y$ is smooth at $y$. 

Thus $Y$ is smooth, $f$ is equidimensional and $X$ is smooth, 
hence Cohen-Macaulay. These imply that $f$ is flat
\cite[Exrc.III.10.9]{hartsh}. \qed

\subsection*{Families of curves}{\ }

We start with two examples of families of reducible curves..

\begin{exmp}\label{exmp.nonormal}
$X\subset \p^3_{\mathbf x}\times \c_s$ is a reducible surface
and $f:X\to  \c_s$ is the coordinate projection.
The fiber $X_0$ is the projective closure  of the 3 coordinate axes in
$\c^3$ and $X_s$ is obtained by sliding the $x_3$-axis
along the $x_2$ axis. In concrete equations
$$
X:=\bigl(x_1=x_3=0\bigr)\cup \bigl(x_2=x_3=0\bigr)\cup 
\bigl(x_1=x_2-sx_0=0\bigr)\subset \p^3_{\mathbf x}\times \c_s.
$$
It is easy to check that  $f:X\to  \c_s$ is a homotopy fiber bundle,
all the fibers are reduced, the fibers
$X_s$ are isomorphic to each other for $s\neq 0$ but $X_0$ is
not homeomorphic to $X_s$ for $s\neq 0$.

It is straightforward to modify this example and obtain
an irreducible (but still non-normal) surface $S$ with a 
proper morphism $f:S\to \c$ which is a  homotopy fiber bundle
such that not all fibers are homeomorphic to each other.
\end{exmp}

\begin{exmp}\label{exmp.nonormal.2} Here again 
$X\subset \p^3_{\mathbf x}\times \c_s$ is a reducible surface
and $f:X\to  \c_s$ is the coordinate projection.
The general fiber is a line $L_t$ 
and a conic $C_t$ intersecting at a point $p\in \p^3$.
As we approach the special fiber, the conic degenerates to a pair
of lines  $L_0+L'_0$ and the line $L_0$ is also the limit of the family
$L_t$. In concrete equations
$$
X:=\bigl(x_2=x_3-tx_1=0\bigr)\cup \bigl(x_3=x_0x_2-tx_2^2=0\bigr)
\subset \p^3_{\mathbf x}\times \c_s.
$$
In this example 
 the
retraction map  induces isomorphisms
$$
\bigl(r_{y'\to y}\bigr)_*: 
H_*\bigl(L_t\cup C_t, R\bigr)\to H_*\bigl(L_0\cup L'_0, R\bigr)
$$
but the fundamental class 
  $\bigl[L_t\cup C_t\bigr] $ is  mapped to 
$2\bigl[L_0\bigr]+ \bigl[L'_0\bigr] $ which is different from 
the fundamental class of the central fiber 
 $\bigl[L_0\cup L'_0\bigr] $.
\end{exmp}

For curves the following result completes the picture.

\begin{prop} Let  $Y$ be a normal complex space and
$f:X\to Y$  a  $\z$-homology fiber bundle of relative dimension 1
with smooth general fibers.
Let $\pi:X^n\to X$ be the normalization of $X$. Then 
\begin{enumerate}
\item $\pi:X^n\to X$ is a homeomorphism and
\item $f\circ \pi:X^n\to Y$ is smooth hence  a differentiable  fiber bundle.
\end{enumerate}
\end{prop}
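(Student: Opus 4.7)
I will combine a topological constraint from the $\z$-homology fiber bundle hypothesis (constancy of Euler characteristic, read through the normalization of a singular curve) with a simultaneous normalization argument. The topological side will give an upper bound $g_\nu(y)\le g$ for the geometric genus of each fiber; simultaneous normalization will give the reverse inequality, forcing equality and hence unibranchness.

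By the Stein-factorization reduction of the ``First properties'' paragraph, I may assume each fiber $C_y:=f^{-1}(y)$ is irreducible; Proposition~\ref{gen.flat.prop} then gives that $C_y$ is reduced and $f$ is smooth at every smooth point of $C_y$. Let $C_y^\nu\to C_y$ denote the (smooth, connected) normalization, of genus $g_\nu(y)$, and let $r_p\ge 1$ be the number of analytic branches of $C_y$ at $p$. Since the normalization is a homeomorphism over the smooth locus of $C_y$ and identifies $r_p$ points to one at each singular $p$, one has
$$
\chi(C_y)\;=\;\chi(C_y^\nu)-\sum_p(r_p-1)\;=\;2-2g_\nu(y)-\sum_p(r_p-1).
$$
The $\z$-homology hypothesis gives $\chi(C_y)=\chi(C_t)=2-2g$, where $g$ is the common genus of the smooth general fibers, hence
\begin{equation}\label{eq.ft.cstrnt}
g-g_\nu(y)\;=\;\tfrac{1}{2}\sum_p(r_p-1)\;\ge\;0.
\end{equation}

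The main step will be to prove (2) by showing that $\pi:X^n\to X$ is a \emph{simultaneous normalization} of the family, i.e., the reduced scheme-theoretic fibers of $f^n:=f\circ\pi$ are precisely the smooth curves $C_y^\nu$. Because $\pi$ is an isomorphism over the smooth locus of $X$, $f^n$ has the same smooth general fibers as $f$; the substantive point is the local structure of $X^n$ along the reduced fiber $C_y^\nu$ at each $y$. Granted this, $X^n$ is normal and equidimensional over $Y$ with smooth (hence Cohen-Macaulay) fibers, so $f^n$ is flat (by the analytic analogue of miracle flatness); flat with smooth fibers gives $f^n$ smooth, and Ehresmann's theorem then promotes this (using properness of $f^n$) to a differentiable fiber bundle. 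With (2) in hand, (1) is immediate: smoothness of $f^n$ makes $g_\nu(y)$ locally constant on $Y$, hence equal to $g$, so \eqref{eq.ft.cstrnt} forces $r_p=1$ at every $p\in C_y$ for every $y$. Every point of $X$ lies on some fiber, so $X$ is analytically unibranch at every point, and $\pi:X^n\to X$ is a homeomorphism.

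The hard part will be the simultaneous-normalization claim. For flat families this is Teissier's $\delta$-constancy criterion, but here flatness is not a priori available and must be deduced from the topological hypothesis. I would argue by localizing: for a small analytic ball $B\ni p$ in $X$, the local fiber $B\cap C_{f(p)}$ deformation-retracts onto the vertex $p$, so the general local fiber $B\cap C_t$ is $\z$-acyclic; being a smooth open complex curve, it must be a disk. This pins down the local picture of $X^n$ over $p$ as a disjoint union of disks, one per analytic branch of $X$ at $p$, which is exactly what is needed to conclude that $\pi^{-1}(C_{f(p)})_{\mathrm{red}}$ agrees with the normalization of $C_{f(p)}$ near $p$ and that $f^n$ is submersive there.
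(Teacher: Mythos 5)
Your architecture (Euler-characteristic bound plus simultaneous normalization, then deducing (1) from (2)) is coherent, and the inequality $g-g_\nu(y)=\tfrac12\sum_p(r_p-1)\ge 0$ is correct; but the step you yourself flag as ``the hard part'' is where the whole proof lives, and what you offer for it does not close. Knowing that the nearby local fiber $B\cap C_t$ is a $\z$-acyclic smooth curve, hence a disk, does \emph{not} by itself ``pin down the local picture of $X^n$ over $p$'' or yield that $f^n$ is submersive there: passing from ``the generic local fiber is a disk'' to ``the special fiber is smooth'' is a genuine theorem about surface germs. Even when $X$ is normal at $p$ one must first rule out that $X$ has a non-smooth normal singularity there whose Milnor fiber happens to be a disk; this needs input such as Mumford's theorem on links of normal surface singularities \cite{mumf-top} (exactly the kind of result the paper is forced to invoke in its threefold proposition), and only after that does positivity of the Milnor number, or A'Campo's result as in Proposition~\ref{localize.nonisol} (stated only for smooth total space), give smoothness of $f^n$. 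At a non-normal point your description is internally inconsistent: you conclude the local generic fiber is a single disk and in the next sentence call it ``a disjoint union of disks, one per analytic branch of $X$ at $p$''; since each branch dominates $Y$ and distinct branches meet only inside $\Sing X$, which the smooth fibers avoid, acyclicity in fact forces a single branch --- a useful observation (it essentially gives (1) directly), but not the one you wrote, and not the submersivity you need. Two further gaps: you never reduce to $\dim Y=1$, which is needed both for the Milnor-ball localization (the passage from the global $\z$-homology equivalence to acyclicity of each local fiber is the content of Proposition~\ref{localize.prop}, not automatic) and for miracle flatness over a base that is only assumed normal; the paper performs this reduction by invoking Proposition~\ref{red.to.dim.1}.

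The paper's own proof avoids all local analysis. It first proves (1) directly: the retraction from a general fiber $B$ to a special fiber $B_0$ factors through the fiber $B_0^n$ of the normalized family, so $H_1(B_0^n,\z)\to H_1(B_0,\z)$ is surjective, which forces $B_0^n\to B_0$ to be a homeomorphism. Having reduced to $X$ normal (hence $f$ flat over a smooth curve), it obtains the inequality you are missing, namely $g_\nu(y)\ge g$, from constancy of the arithmetic genus $p_a(B_0)=p_g(B)$ combined with $p_a(B_0)\ge p_g(B_0'')+\sum(m_i-1)$ through the seminormalization; together with the Euler-characteristic identity this pinches $\sum(m_i-1)=0$ and $p_a=p_g$, i.e.\ smoothness of every fiber. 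If you want to keep your route, the cleanest repair is to import that algebraic input (constancy of $p_a$ in a flat proper family over a smooth curve) rather than to try to establish the local submersivity topologically.
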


Proof. As we noted before, we may assume that
$f$ has irreducible fibers. 

Let us start with the case when $Y$ is a  smooth curve.
Let $B$ be a general fiber and $B_0$ any fiber.
Let  $B_0^n$ be the 
corresponding fiber in $X^n\to Y$. Note that the retraction
$r:B\to B_0$ factors through $B_0^n$. 
It is easy to see that 
$H_1(B^n_0,\z)\to H_1(B_0,\z)$ is surjective iff
$B_0^n\to B_0$ is a homeomorphism. Thus if  $X\to Y$ is a 
$\z$-homology fiber bundle then $B_0^n\to B_0$ and $X^n\to X$ are
homeomorphisms. We may thus assume that $X$ is normal.
In particular  the fibers are reduced and
$p_a(B_0)=p_a(B)=p_g(B)$ since $B$ is smooth.

Let $B'_0\to B_0$ denote the seminormalization and 
$B''_0\to B'_0$  the normalization. If $m_i$ are the multiplicities of the
points of  $B'_0$ then
$$
p_a(B_0)\geq p_a(B'_0)=p_a(B''_0)+\tsum (m_i-1)=p_g(B''_0)+\tsum (m_i-1).
$$
We can thus estimate the topological Euler characteristic as 
$$
\begin{array}{rcl}
\chi^{top}(B_0)&=&\chi^{top}(B'_0)=\chi^{top}(B''_0)-\sum (m_i-1)=
2-2p_g(B''_0)-\sum (m_i-1)\\
&\geq & 2-2p_a(B_0)+\sum (m_i-1).
\end{array}
$$
On the other hand, if $f$ is a  homotopy fiber bundle then
$$
\chi^{top}(B_0)=\chi^{top}(B)=2-2p_g(B)=2-2p_a(B_0).
$$
Comparing these two we see that $ \sum (m_i-1)=0$
and $p_a(B_0)= p_a(B'_0)$ hence $B_0\cong B'_0\cong B''_0 $.
Thus every fiber of $f$ is smooth.

This implies the general case by applying Proposition  \ref{red.to.dim.1}
to the class of all  smooth projective curves as  ${\mathcal V}$
\qed
\medskip

\subsection*{Reduction to 1-parameter families}{\ }

Here we show that  a variant of  Conjecture \ref{smooth.conj}
 can be reduced to the case when $\dim Y=1$.
To make this precise, fix a class of  projective varieties
${\mathcal V}$ and consider the following.

\begin{conj}\label{smooth.conj.V}
Let  $f:X\to Y$ be
a projective morphism of  complex spaces, $Y$  normal. Assume that 
\begin{enumerate}
\item 
there is a Zariski dense open subset
$Y^0\subset Y$ such that the fibers of $f$ over $Y^0$ are all in ${\mathcal V}$
and
\item $f$ is  a homotopy (resp.\  $\z$-homology) fiber bundle.
\end{enumerate}
Let $\pi:X^n\to X$ be the normalization of $X$. Then 
\begin{enumerate}\setcounter{enumi}{2}
\item $\pi:X^n\to X$ is a homeomorphism and
\item $f\circ \pi:X^n\to Y$ is smooth hence  a differentiable  fiber bundle.
\end{enumerate}
\end{conj}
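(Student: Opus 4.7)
The plan is to reduce Conjecture \ref{smooth.conj.V} to the case $\dim Y = 1$, in the spirit of the proof of the preceding proposition. The two conclusions are local on $Y$, so fix $y \in Y$; by Proposition \ref{gen.flat.prop}, $f$ is already smooth at every smooth point of $\red f^{-1}(y)$, so the ``bad'' points where $\pi$ fails to be a homeomorphism or where $f \circ \pi$ fails to be smooth lie above a proper analytic subset of $Y$. I would probe such points by slicing $Y$ with sufficiently generic smooth curve germs through $y$.

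Equip $X$ with a Whitney stratification compatible with $f$ and containing the preimage of $Y \setminus Y^0$, the singular locus of $X$, and the singular loci of the fibers. For a smooth curve germ $C \subset Y$ through $y$ whose tangent direction is transverse to the image stratification of $Y$, Thom's first isotopy lemma gives local topological triviality of $f$ along $C$, and the retractions $r_{y' \to y}$ for $f$ restrict to analogous retractions for $f_C \colon X \times_Y C \to C$. Since the fibers of $f_C$ and $f$ agree, $f_C$ inherits the homotopy (resp.\ $\Zb$-homology) fiber bundle property, with generic fibers still in $\mathcal{V}$. The $\dim Y = 1$ hypothesis then yields, for every such $C$, that the normalization of $X \times_Y C$ is a homeomorphism onto $X \times_Y C$ and that $f_C$ is smooth.

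The remaining task is to ascend from these slicewise conclusions to the corresponding statements for $\pi$ and $f \circ \pi$, and this is the main obstacle. For the smoothness of $f \circ \pi$ it suffices to check that its fibers are smooth, together with flatness, which follows from Corollary \ref{flat.cor} once we know that $X^n$ is smooth; smoothness of fibers can in turn be extracted from smoothness of a sufficiently rich family of pullbacks $f_C$, since the fibers of $f_C$ are literally the corresponding fibers of $f \circ \pi$. The more delicate point is showing that $\pi \colon X^n \to X$ is a homeomorphism, because non-normal behavior of $X$ could in principle occur along directions not captured by any single curve slice; here one would exploit the local product structure afforded by Whitney regularity to conclude that normalization commutes with the slicing. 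A second, independent obstacle is proving the $\dim Y = 1$ case of the conjecture for classes $\mathcal{V}$ beyond smooth curves --- for example smooth surfaces or Calabi--Yau manifolds --- since the Euler characteristic comparison used in the curves case does not obviously generalize.
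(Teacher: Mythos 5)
The statement you have been asked to prove is labelled a \emph{conjecture} in the paper, and the paper does not prove it: it is left open, and the authors establish only the reduction to the case $\dim Y=1$ (Proposition \ref{red.to.dim.1}) together with a few special instances (families of curves, and threefold total spaces with normal central fibre). Your proposal honestly concedes both that the base case $\dim Y=1$ is unproven for general ${\mathcal V}$ and that the ascent from curve slices to the global statement is incomplete, so it is not a proof of the conjecture; what can be compared is your reduction step versus the paper's Proposition \ref{red.to.dim.1}.

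There the paper sidesteps precisely the ``ascent'' obstacle you identify as the main difficulty. Instead of slicing by generic curves transverse to a Whitney stratification, it pulls the family back along \emph{every} holomorphic disc $\phi:\Delta\to Y$ whose image is not contained in $Y\setminus Y^0$; the $\dim Y=1$ case then says that each $\phi^*f$ admits a simultaneous normalization (the normalization of $X\times_Y\Delta$ is smooth over $\Delta$ and maps homeomorphically onto $X\times_Y\Delta$). By \cite[Thm.1]{kollar-husks-new} the functor of simultaneous normalizations is represented by a monomorphism $Y'\to Y$; since every such disc factors through $Y'$, one concludes $Y'=Y$, and conclusions (3) and (4) follow globally in one stroke. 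Your generic-slice approach has a genuine gap exactly where you locate it: a generic transverse curve need not meet the locus where $\pi$ fails to be a homeomorphism or where $f\circ\pi$ fails to be smooth (that locus can be a proper analytic subset invisible to any fixed generic slice), and without a representability statement of the above kind there is no evident way to glue the slicewise conclusions. Two smaller points: restricting to curves transverse to a stratification is strictly weaker than testing on all discs, which is what the husks theorem requires; and your appeal to Corollary \ref{flat.cor} is circular as written, since that corollary assumes the total space is already smooth, which is part of what is to be proved.
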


We can now state the precise form of the dimension reduction.

\begin{prop}\label{red.to.dim.1} Fix a class  of smooth projective varieties
${\mathcal V}$ and assume that
 Conjecture \ref{smooth.conj.V} holds for ${\mathcal V}$ whenever $\dim Y=1$.

Then Conjecture \ref{smooth.conj.V} holds for ${\mathcal V}$ in general.
\end{prop}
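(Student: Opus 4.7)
The plan is to verify conclusions (3) and (4) of the conjecture pointwise over $Y$ by restricting $f$ to sufficiently general smooth curve germs through each point $y_0\in Y$ and invoking the one-dimensional case. Since $f$ is proper and algebraic, it admits a Whitney stratification for which Thom's first isotopy lemma produces local topological trivialisations over each stratum of $Y$. I would fix such a stratification and, for each $y_0\in Y$, choose an analytic curve germ $(C,y_0)\subset(Y,y_0)$ which is smooth (or use its normalization), meets $Y^0$ in a dense open of $C$ (possible because $Y\setminus Y^0$ is a proper closed analytic subset), and is transverse to every stratum of $Y$ that it meets. To deduce the conclusions at every point of $f^{-1}(y_0)$ and for every direction in $T_{y_0}Y$, I let $C$ range over a sufficiently rich family of such curves.

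The intermediate step is to check that $f_C\colon f^{-1}(C)\to C$ inherits hypotheses (1) and (2). Hypothesis (1) is immediate, since the fibers of $f_C$ over $C\cap Y^0$ are fibers of $f$ over $Y^0$ and so lie in $\mathcal V$. For hypothesis (2), the transversality of $C$ allows the controlled vector fields of Thom--Mather theory to be chosen tangent to $f^{-1}(C)$, so the local retractions $r_y\colon f^{-1}(U_y)\to f^{-1}(y)$ preserve $f^{-1}(C)$ and restrict to deformation retractions of $f^{-1}(U_y\cap C)$ onto $f^{-1}(y)$; this shows $f_C$ is a homotopy (resp.\ $\z$-homology) fiber bundle. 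Applying the one-dimensional hypothesis to $f_C$ then yields that the normalization $(f^{-1}(C))^n\to f^{-1}(C)$ is a homeomorphism and $(f^{-1}(C))^n\to C$ is smooth.

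Finally, I would transfer these conclusions from $f_C$ back to $f$. For (3), $\pi\colon X^n\to X$ is finite, and for each $x\in f^{-1}(y_0)$ the fiber $\pi^{-1}(x)$ is contained in the fiber over $x$ of $(f^{-1}(C))^n\to f^{-1}(C)$ for any $C$ with $x\in f^{-1}(C)$, hence reduces to a single point, so $\pi$ is a homeomorphism. For (4), smoothness of the various $(f^{-1}(C))^n\to C$, as $C$ ranges over curves whose tangents at $y_0$ span $T_{y_0}Y$, forces $X^n$ to be smooth at every point of $\pi^{-1}(f^{-1}(y_0))$; Corollary \ref{flat.cor} then applies to $f\circ\pi$ (which is a $\z$-homology fiber bundle because $\pi$ is a homeomorphism) and gives $Y$ smooth at $y_0$ together with flatness of $f\circ\pi$, after which the smoothness of the fibers supplied by the 1-dimensional case upgrades flatness to smoothness. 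I expect the main obstacle to be the second step: justifying rigorously that the controlled vector fields can be arranged tangent to $f^{-1}(C)$ so that the fiber bundle hypothesis descends. A secondary subtlety is that normalization need not commute with the base change $C\hookrightarrow Y$, so identifying $(f^{-1}(C))^n$ with a subvariety of $X^n$ requires verification.
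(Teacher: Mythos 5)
Your overall strategy---slice $Y$ by one-dimensional subspaces through each point and invoke the hypothesis there---is the same as the paper's, but the step where you pass from the curve-wise conclusions back to a statement about $X^n\to Y$ is a genuine gap, and it is exactly the point where the paper needs a nontrivial external input. The paper applies the one-dimensional case to $\phi^*f\colon X\times_Y\Delta\to\Delta$ for an \emph{arbitrary} holomorphic disc $\phi\colon\Delta\to Y$ whose image is not contained in $Y\setminus Y^0$, concludes that every such pullback admits a simultaneous normalization, and then invokes \cite[Thm.~1]{kollar-husks-new}: the functor of simultaneous normalizations of $f$ is represented by a monomorphism $Y'\to Y$, so the fact that every disc factors through $Y'$ forces $Y'=Y$; hence $f$ itself admits a simultaneous normalization, which is identified with $X^n\to Y$. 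Your substitute---letting $C$ range over curves whose tangents span $T_{y_0}Y$ and asserting that this ``forces $X^n$ to be smooth'' at points over $y_0$---is not a proof. As you yourself note, $(f^{-1}(C))^n$ is not a subvariety of $X^n$ (normalization does not commute with the base change $C\hookrightarrow Y$), and nothing in your argument rules out the possibility that the curve-wise normalizations fail to assemble into a single finite modification of $X$ that is flat and smooth over $Y$; flatness of $f\circ\pi$ in particular cannot be checked one tangent direction at a time. Ruling this out is precisely the content of the representability theorem, so without citing it (or reproving it) your last paragraph does not close. Relatedly, your appeal to Corollary~\ref{flat.cor} at the end is circular in spirit: that corollary assumes $X$ smooth, which is not a hypothesis of Conjecture~\ref{smooth.conj.V}.

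Two smaller points. First, the descent of hypothesis (2) to the slice is easier than you fear and needs no Thom--Mather controlled vector fields nor transversality to a Whitney stratification: local retractions onto fibres exist for any proper morphism of complex spaces, and by the fibre-to-fibre reformulation in Definition~\ref{homot.fibbund.defn} the condition only involves the maps $r_{y'\to y}$ between fibres, which are unchanged under base change (any two retractions that restrict to the identity on the central fibre induce homotopic fibre-to-fibre maps). This is why the paper can use arbitrary discs $\Delta\to Y$, which is what the representability argument requires. Second, in your argument for conclusion (3) the containment goes the wrong way: the natural map is $(f^{-1}(C))^n\to X^n\times_Y C$, since both normalize $f^{-1}(C)$ over the dense open set where $\pi$ is an isomorphism, so $\pi^{-1}(x)$ is an image, not a subset, of the corresponding fibre of $(f^{-1}(C))^n$; one then still has to check surjectivity of this map to conclude that $\pi^{-1}(x)$ is a point.
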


Proof.  Let $\Delta\subset \c$ be 
the unit disc and 
 $\phi:\Delta\to Y$  any holomorphic map whose image is not contained in
$Y\setminus Y^0$. Let $X_\phi^n$ denote the normalization of 
$X\times_Y\Delta $. By assumption, $X_\phi^n\to \Delta$ is smooth hence it is the
simultaneous normalization of $\phi^*f:X\times_Y\Delta\to \Delta$.
In particular, the normalization of the fibers are all smooth
and the normalization map is a homeomorphism.

 By \cite[Thm.1]{kollar-husks-new}, there is a
monomorphism  $Y'\to Y$ such that $\phi^*f:X\times_Y\Delta\to \Delta$
has a simultaneous normalization iff $\phi$ factors through $Y'\to Y$.
Thus $Y'=Y$, the composite
  $f\circ \pi:X^n\to Y$ is smooth and $\pi:X^n\to X$ is a homeomorphism.
\qed

\subsection*{Localization}{\ }

Motivated by Proposition \ref{red.to.dim.1}, from now on we
concentrate on 1-parameter families. That is, $X$ is a 
normal analytic space and
$f:X\to \Delta$  a proper morphism with central fiber $X_0=f^{-1}(0)$.
By shrinking $\Delta$ we may assume that 
$X\setminus X^0\to \Delta^*$ is a topological fiber bundle.

We show that if $X_0$ has  isolated singularities then
$\z$-homology fiber bundles can be characterized
in terms of the Milnor fibers of the singular points
of $X_0$.
Subsequent examples show that there are global issues for
non-isolated singularities.

\begin{prop}\label{localize.prop} Let $X$ be a normal analytic space and
$f:X\to \Delta$  a proper morphism with central fiber $X_0=f^{-1}(0)$.
Assume that $X_0$ has only isolated singularities $p_i\in X_0$.
For each $i$, let $B_i$ be a small ball around $p_i$ and
set $M_{i,t}:=X_t\cap B_i$.  (If $X_t$ is smooth, this is the Milnor fiber.) 
The following are equivalent.
\begin{enumerate}
\item  For $0<|t|\ll 1$, the retraction map $r_t:X_t\to X_0$ is an $R$-homology
equivalence.
\item For $0<|t|\ll 1$ every $M_{i,t}$ is an  $R$-homology ball.
\end{enumerate}
\end{prop}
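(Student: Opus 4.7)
The plan is to localize the question: reduce the global homology equivalence $r_t:X_t\to X_0$ to statements near each $p_i$ via a Mayer--Vietoris decomposition of $X$ into Milnor tubes and their exterior.

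First, I would fix the local geometry. Choose pairwise disjoint closed balls $B_i$ around the $p_i$ in a local analytic embedding, small enough (and $\Delta$ shrunk enough) that: each $X\cap B_i$ and $X_0\cap B_i$ is a contractible cone on the corresponding link, and the restrictions $f|_{X\cap \partial B_i}$ and $f|_{X\setminus \bigcup \inter B_i}$ are proper topological submersions. This is the main analytic input and follows from the conical structure of analytic germs together with the Thom--Mather / Verdier isotopy lemma for a Whitney stratification of $X$ compatible with $X_0$ and with the $\partial B_i$. After shrinking $\Delta$, the only singularities of $X$ lie at the $p_i$, so the smoothness of $X\setminus\{p_i\}$ provides the submersion property.

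Next, I would use Ehresmann's theorem to trivialize and then construct a convenient retraction. Set $X^*:=X\setminus \bigcup \inter B_i$, $X_t^*:=X_t\cap X^*$, and $M_{i,t}^\partial:=X_t\cap \partial B_i$. The submersions above give homeomorphisms $X_t^*\cong X_0^*$ and $M_{i,t}^\partial\cong X_0\cap \partial B_i$ for $|t|$ small, compatible with the inclusions into $X^*$ and $X\cap\partial B_i$. I would then choose $r_t:X_t\to X_0$ to agree with these identifications on $X_t^*$ and $M_{i,t}^\partial$ and to send each $M_{i,t}$ into the contractible cone $X_0\cap B_i$; any continuous extension works, as the target is contractible and the choice of retraction is homotopically irrelevant.

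Finally, I would compare the Mayer--Vietoris sequences in $R$-homology of
$$
X_t=X_t^*\cup \bigsqcup_i M_{i,t} \qtq{and} X_0=X_0^*\cup \bigsqcup_i(X_0\cap B_i),
$$
with respective intersections $\bigsqcup_i M_{i,t}^\partial$ and $\bigsqcup_i(X_0\cap \partial B_i)$. By construction the map induced by $r_t$ is an isomorphism on the exterior pieces and on the intersections. The five lemma then shows that $r_t$ is an $R$-homology equivalence if and only if each restriction $M_{i,t}\to X_0\cap B_i$ is, and since $X_0\cap B_i$ is contractible, the latter condition is precisely $\tilde H_*(M_{i,t};R)=0$, i.e., $M_{i,t}$ is an $R$-homology ball. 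The step I expect to be the main obstacle is the first one: arranging a single choice of Milnor balls so that $f$ is simultaneously a topological submersion on the link cylinders $X\cap \partial B_i$ and on the exterior $X^*$. Once this standard but nontrivial analytic input is granted, the rest of the argument is a formal Mayer--Vietoris / five-lemma comparison.
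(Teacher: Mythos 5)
Your proposal is correct and follows essentially the same route as the paper: both reduce the global statement to the local pieces $M_{i,t}$ by choosing the balls so that $X_t$ meets $\partial B_i$ transversely, arranging the retraction to be a homeomorphism away from the $B_i$, and then applying a five-lemma comparison (the paper via the long exact sequences of the pairs $\bigl(X_t,\cup_i M_{i,t}\bigr)\to\bigl(X_0,\cup_i M_{i,0}\bigr)$, you via the equivalent Mayer--Vietoris decomposition), finishing with the contractibility of the local cones $M_{i,0}$.
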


Proof.   Choose $\Delta_{\epsilon}\subset \Delta$ 
small enough so that $X_t$ meets $\partial B_i$ 
transversely for any $i$ and any $t\in\Delta_{\epsilon}$.
One can choose the retraction such that
$r_t$ induces a homeomorphism 
$$
r_t:X_t\setminus \cup_i M_{i,t}\cong X_0\setminus \cup_i M_{i,0}.
$$
Comparing the long exact homology  sequences of the pairs
$$
r_t: \bigl( X_t, \cup_i M_{i,t}\bigr)\to \bigl( X_0, \cup_i M_{i,0}\bigr)
$$
we see that $r_t:X_t\to  X_0$ is an $R$-homology
equivalence iff the restrictions
$r_{i,t}: M_{i,t}\to M_{i,0}$ are $R$-homology
equivalences. Since the $M_{i,0}$ are contractible, the latter holds
iff the $M_{i,t}$ are $R$-homology balls. 
\qed
\medskip

When the source $X$ of the mapping $f$ is smooth, the following local result for non-isolated singularities
is a corollary of the work of A'Campo on monodromy of singularities. 
We thank A'Campo for pointing this out. 

\begin{prop}\label{localize.nonisol} Let $X$ be smooth and $p\in X$ a point. Let
$f:(X,p)\to \Delta$ be a germ of analytic mapping. Let $B$ be a Milnor ball around $p$ and $D$ a Milnor disc around $f(p)$.
Set $M_{i,t}:=X_t\cap B_i$ for any $t\in D$ (for $t\neq f(p)$ this is the Milnor fiber.) 
Let $R$ be any ring. The following are equivalent.
\begin{enumerate}
\item  For $0<|t|\ll 1$, the retraction map $r_t:X_t\to X_0$ is an $R$-homology
equivalence.
\item The morphism $f$ is smooth at $p$.
\end{enumerate}
\end{prop}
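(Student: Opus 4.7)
\medskip

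\noindent\emph{Plan of proof.} The plan is to obtain $(2)\Rightarrow(1)$ from the implicit function theorem and $(1)\Rightarrow(2)$ by contradiction, using A'Campo's theorem on the zeta function of the monodromy of an analytic germ.

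For $(2)\Rightarrow(1)$, if $df_p\neq 0$ the implicit function theorem provides analytic coordinates near $p$ in which $f$ is a linear projection. After shrinking $B$ and $D$ the map $B\cap f^{-1}(D)\to D$ becomes a trivial bundle of open balls, so the retraction $r_t:M_t\to M_0$ (with $M_t:=X_t\cap B$) can be chosen to be a homeomorphism, and is tautologically an $R$-homology equivalence.

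For $(1)\Rightarrow(2)$, I would suppose that $f$ is not smooth at $p$ and derive a contradiction. Since $X$ is smooth at $p$, the local central fiber $M_0=X_0\cap B$ is a real cone on its link for $B$ a sufficiently small Milnor ball, and is in particular contractible. Thus condition (1) is equivalent to the Milnor fiber $M_t$ being $R$-acyclic for $0<|t|\ll 1$. Let $h:M_t\to M_t$ denote the geometric monodromy, obtained by parallel transport along a small loop in $D\setminus\{f(p)\}$. The key input is A'Campo's theorem on the monodromy zeta function: when $p$ is a critical point of $f$ and $\dim X\geq 2$, the integral Lefschetz number
$$
L(h;\ZZ)\;=\;\sum_i(-1)^i\tr\bigl(h_*\mid H_i(M_t;\ZZ)\bigr)
$$
vanishes. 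Since $M_t$ has the homotopy type of a finite CW complex on which $h$ may be modeled by a cellular map, the Lefschetz number is computed as an alternating trace sum on a finite free $\ZZ$-chain complex; trace commutes with $-\otimes_\ZZ R$, so $L(h;R)=L(h;\ZZ)=0$ for every nonzero ring $R$. On the other hand, $R$-acyclicity of $M_t$ forces $M_t$ to be connected, so $h$ acts trivially on $H_0(M_t;R)=R$ and the alternating trace sum collapses to $L(h;R)=1$, giving the desired contradiction.

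The remaining case $\dim X=1$ is handled directly: if $f$ is not smooth at $p$, then $f$ vanishes to some order $d\geq 2$ at $p$, so $M_t$ is a disjoint union of $d$ points while $M_0=\{p\}$, and $H_0(M_t;R)\to H_0(M_0;R)$ fails to be an isomorphism for any $R\neq 0$. The main obstacle is the appeal to A'Campo's theorem, which carries the real topological content; the secondary technical point is the chain-level, coefficient-free nature of the Lefschetz number, which follows from Milnor's finite CW description of the local fibration and the well-definedness up to isotopy of the geometric monodromy on $M_t$.
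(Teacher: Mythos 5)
Your proof is correct and takes essentially the same approach as the paper: the content of $(1)\Rightarrow(2)$ is A'Campo's vanishing of the Lefschetz number of the monodromy at a critical point, played off against the value $1$ forced by $R$-acyclicity of the Milnor fiber (which follows from (1) since the local central fiber is contractible). The details you add --- the coefficient-independence of the Lefschetz number at the chain level, the implicit-function-theorem direction, and the one-dimensional case (which in fact A'Campo's theorem also covers, the monodromy being a fixed-point-free cyclic permutation of $d$ points) --- are all points the paper leaves implicit.
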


Proof.  
In~\cite{AC} it is proved that under the given hypothesis the Lefschetz number of the monodromy of the Milnor fibration
equals $0$ if $f$ is not smooth at $0$ and it is obvious that it equals $1$ if $f$ is smooth. If the retraction map is a 
$R$-homology equivalence, then the Lefschetz number of the monodromy of the Milnor fibration equals $1$.
\qed
\medskip

Milnor fibers of isolated singularities have been extensively studied.
For surfaces   the following result
seems to have been known  but not explicitly stated;
see \cite{steenbrink, wah-loo} for closely related results.
The argument below was shown to us by A.~N\'emethi.

\begin{prop} Let $X$ be a normal threefold and 
$f:X\to \Delta$ a  $\z$-homology fiber bundle whose general fiber
is smooth and whose central fiber $X_0$ 
is normal. Then $f$ is smooth, $X$ is smooth
and $f$ is a differentiable  fiber bundle.
\end{prop}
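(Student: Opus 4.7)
The strategy is to localize around the singular points of $X_0$, which are isolated because $X_0$ is a normal surface, and then invoke the theory of smoothings of normal surface singularities. A first preliminary: since $X$ is normal, every local ring $\mathcal{O}_{X,p}$ is an integral domain, so a local equation of the Cartier divisor $X_0$ is a non-zerodivisor; hence $f:X\to\Delta$ is flat. All fibers are Cohen-Macaulay surfaces, so $X$ itself is Cohen-Macaulay over the smooth base.

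By Proposition~\ref{gen.flat.prop}, $f$ is smooth at every smooth point of $X_0$. If $X_0$ were everywhere smooth then $f$ would be a proper submersion with $X$ smooth, hence a differentiable fiber bundle by Ehresmann's theorem. It therefore suffices to show that $X_0$ has no singular points. Suppose for contradiction that $p\in X_0$ is singular; since $X_0$ is normal, this is an isolated singularity. Proposition~\ref{localize.prop} then yields that the Milnor fiber $F:=X_t\cap B_\epsilon(p)$ is a $\z$-homology ball for $0<|t|\ll 1$; in particular $b_2(F)=0$, and from the long exact sequence of the pair $(F,\partial F)$ the link $L=\partial F$ is a $\z$-homology $3$-sphere.

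The main obstacle is now the singularity-theoretic step: to deduce from these properties that $(X_0,p)$ must be smooth. I would invoke Steenbrink's mixed Hodge theoretic analysis of the cohomology of the Milnor fiber of a smoothing of a normal surface singularity (see \cite{steenbrink}), together with Laufer--Steenbrink--Durfee type identities relating $b_i(F)$ and the signature $\sigma(F)$ to the geometric genus $p_g$, the self-intersection $K^2$ on a good resolution, and the Betti numbers of the exceptional divisor. The hypothesis that $F$ has the integer homology of a point forces $p_g=0$, and combined with the link being a $\z$-homology sphere it forces the exceptional divisor of a good resolution of $(X_0,p)$ to be empty, i.e., $(X_0,p)$ smooth. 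Alternatively, one may appeal to the classification of smoothings with small Milnor fibers (cf.\ \cite{wah-loo}), where the integral homology ball case singles out the smooth germ. This contradicts the assumption that $p$ is singular, and completes the proof.

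I expect the principal difficulty to be this last step: the Laufer--Steenbrink type formulas are cleanest for Gorenstein germs, and some care is required to reduce to (or find appropriate analogues of) the necessary formulas in our generality, using the $\z$-homology sphere structure of the link as a partial substitute for a Gorenstein hypothesis. Everything else (flatness, reduction to smoothness of $X_0$, localization to Milnor fibers) is a straightforward assembly of Proposition~\ref{gen.flat.prop} and Proposition~\ref{localize.prop}.
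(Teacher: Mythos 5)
Your overall architecture is the paper's: localize via Proposition~\ref{localize.prop} to the isolated singular points of the normal surface $X_0$, conclude that each Milnor fiber $F$ is a $\z$-homology ball with $\z$-homology sphere link, and then argue that no nontrivial normal surface singularity admits such a smoothing. The flatness preliminaries and the appeal to Proposition~\ref{gen.flat.prop} are fine, and the derivation of $p_g=0$ from $\mu_0=\mu_+=\mu_-=0$ via Steenbrink's identity $\mu_0+\mu_+=2p_g$ is exactly what the paper does.

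The gap is in your final implication: it is \emph{false} that $p_g=0$ together with the link being a $\z$-homology sphere forces the exceptional divisor of a good resolution to be empty. The $E_8$ singularity $x^2+y^3+z^5=0$ is a rational double point ($p_g=0$) whose link is the Poincar\'e homology sphere, a $\z$-homology $3$-sphere, yet it is certainly not smooth; so the step as you state it proves too much and cannot be repaired by sharpening the same inequality. The paper's route around this is different and essential: from the link being a $\z$-homology sphere one gets $\cl(X_0,p)\cong H^2(L,\z)=0$ by Mumford, hence $K$ is Cartier at $p$; a rational Gorenstein surface singularity is Du~Val, hence a hypersurface singularity; and a smoothing of a nontrivial isolated hypersurface singularity has nonzero Milnor number (vanishing cycles), so its Milnor fiber is not even a $\q$-homology ball. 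That last return to the Milnor fiber --- not the link --- is what kills $E_8$ and its kin. Your alternative suggestion of citing a ``classification of smoothings with small Milnor fibers'' is essentially asserting the conclusion you need to prove; \cite{wah-loo} in fact supplies many normal singularities whose Milnor fibers are $\q$-homology balls, so the integral structure must be used, and the Mumford--Du~Val--vanishing-cycle chain is the missing argument.
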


Proof. Using Proposition \ref{localize.prop}, 
we need to consider the Milnor fibers of the singular points of $X_0$.

In general, let $(s\in S)$ be an isolated surface singularity and
$M$ the Milnor fiber of a smoothing.
The link $L$ of $S$ is diffeomorphic to the boundary $\partial M$ of $M$.
Let $\mu_0, \mu_+, \mu_-$ denote the number of zero (resp.\ positive, negative)
eigenvalues of the intersection form on the middle cohomology of $M$.

If $M$ is a $\q$-homology ball then these are all 0.
By \cite[2.24]{steenbrink}, $\mu_0 + \mu_+=2p_g(s\in S)$
where $p_g$ denotes the geometric genus of the singularity $(s\in S)$.
For a normal surface singularity  $p_g(s\in S)=\dim_s R^1g_*\o_{S'}$
where $g:S'\to S$ is a resolution of singularities.
Thus if $M$ is a $\q$-homology ball then $(s\in S)$ is a rational
singularity.

(If $(s\in S)$ is an isolated non-normal surface singularity, then
$p_g(s\in S)=\dim_s R^1g_*\o_{S'}-\dim \o_{\bar S}/\o_S$ where
$\bar S\to S$ is the normalization. There are many examples where
$M$ is a $\q$-homology ball yet $(\bar s\in \bar S)$ is not a rational
singularity.)

If $M$ is a  $\z$-homology ball, then
$L\sim \partial M$ is a  $\z$-homology sphere, hence
$\cl(S)\cong H^2(L,\z)$ is trivial \cite[p.240]{mumf-top}. 
Thus $S$ is rational and
$K_S$ is Cartier; this happens only if $S$ is a Du~Val singularity.
For smoothings of isolated hypersurface singularities there are
vanishing cycles. \qed

\begin{rem}\label{Q-hom.exmp.1} This  suggests that
Conjecture \ref{smooth.conj.V} may hold for
${\mathcal V}=\{\mbox{smooth surfaces}\}$, but there are
many more cases to check. We do not even know what happens 
when the special fiber has isolated (but  non-normal)  singularities.

 By contrast, there are many normal surface singularities
whose Milnor fiber is a $\q$-homology ball.
See, for instance, \cite[5.9]{wah-loo}.
\end{rem}

\begin{exmp}\label{wahl.exmp}
Let  $X^n\subset \p^N$ be a  smooth variety and 
$Y\subset X$  a hyperplane section such that $X\setminus Y\cong \c^n$
The simplest examples are smooth quadrics $\q^{n}\subset \p^{n+1}$
where $Y$ is a tangent plane; for more complicated examples 
with $\dim X=3$ 
see  \cite{fur-nak, fur93a, fur93b}.

One gets a family of $n$--folds
$f:{\mathbf X}\to \c$ whose general fibers $X_t$ are isomorphic to $X$
and whose special fiber $X_0$ is  isomorphic to the cone over $Y$
(possibly with some embedded points at the vertex).
For quadrics an explicit example is the family
$$
\bigl(x_0^2+\cdots x_{n-1}^2+tx_n^2+tx_{n+1}^2=0\bigr)
\subset \p^n_{\mathbf x}\times \c_t.
$$
Note that the rank drops by 2 at the origin.

If $n$ is odd, this is a $\q$-homology fiber bundle but the retraction  map
$$
\z\cong H_{n+1}\bigl(X_t,\z\bigr)\to  H_{n+1}\bigl(X_0,\z\bigr)\cong\z
$$
is multiplication by 2.
In  all  the other 3-fold  examples
the retraction induces 
$$
\z\cong H_4\bigl(X_t,\z\bigr)\to  H_4\bigl(X_0,\z\bigr)\cong\z
$$
which is multiplication by $\deg X>1$.
\end{exmp}

The following lemma shows that this construction never
gives interesting $\z$-homology equivalences.

\begin{prop} \label{def.to.cone.prop}
Let $X\subset \p^N$ be a smooth projective variety and
$Y=H\cap X\subset X$ a hyperplane section. Let
$C(Y)$ denote the cone over $Y$ and
$r_Y:X\to C(Y)$ the retraction. 
Assume that $X\setminus Y$ is a $\z$-homology ball and 
 $r_Y$ is a $\z$-homology equivalence. 
Then $X$ is a linear subspace.
\end{prop}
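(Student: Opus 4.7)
The plan is to compute enough of $H^*(X, \z)$ to conclude $\deg X = 1$, which then forces $X \subset \p^N$ to be a linear subspace. Write $n = \dim X$ and let $v$ denote the vertex of $C(Y)$.

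First, since $X\setminus Y$ is an oriented smooth $2n$-manifold with the $\z$-homology of a point, Poincar\'e--Lefschetz duality gives $H^k(X, Y) = 0$ for $k \neq 2n$ and $H^{2n}(X, Y) = \z$. The long exact sequence of the pair $(X, Y)$, together with $\dim_{\r} Y = 2n - 2$, then makes the restriction $i_X^*: H^k(X) \to H^k(Y)$ an isomorphism for $k \leq 2n - 2$, and gives $H^{2n}(X) = \z$. Because $r_Y$ is a $\z$-homology equivalence that restricts to the identity on $Y$, the five-lemma applied to the long exact sequences of the pairs $(X, Y)$ and $(C(Y), Y)$ transports the same vanishing to $(C(Y), Y)$.

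Next, the deformation retract $C(Y)\setminus\{v\} \simeq Y$ (via lines through $v$) and excision around $v$ give $H^k(C(Y), Y) \cong \tilde{H}^{k-1}(L)$, where $L$ is the link of $v$ in $C(Y)$; so $L$ is a $\z$-cohomology sphere of dimension $2n-1$. The $\c^*$-scaling on the affine cone over $Y$ realizes $L \to Y$ as a principal $S^1$-bundle with Euler class $e = \pm h|_Y \in H^2(Y)$, where $h \in H^2(\p^N)$ is the hyperplane class. The Gysin sequence of this bundle, combined with the sphere-cohomology of $L$, forces $\cup e: H^{k-2}(Y) \to H^k(Y)$ to be an isomorphism for $k \leq 2n - 2$. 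Starting from $H^0(Y) = \z$, one obtains $H^{2j}(Y) = \z \cdot e^j$ for $j = 0, \dots, n - 1$ with vanishing odd-degree cohomology. Via the earlier restriction isomorphism, this upgrades to $H^{2j}(X) = \z \cdot h^j$ for $j = 0, \dots, n-1$, and $H^{2n}(X) = \z$.

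Finally, $H^*(X, \z)$ is torsion-free and concentrated in even degrees, so Poincar\'e duality on the smooth compact $X$ yields a perfect pairing $H^2(X) \otimes H^{2n-2}(X) \to H^{2n}(X) \cong \z$ of free rank-one groups generated by $h$ and $h^{n-1}$; perfectness forces $h^n$ to generate $H^{2n}(X)$, i.e., $\deg X = \int_X h^n = 1$, so $X$ is a linear subspace. The delicate point I anticipate is verifying the principal $S^1$-bundle structure on $L \to Y$ when $Y$ may be singular, but this reduces to the freeness of the $\c^*$-scaling on the affine cone away from the vertex.
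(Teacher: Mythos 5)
Your argument is correct, and it reaches the paper's intermediate goal by a different mechanism. Both proofs boil down to showing that the hyperplane class acts invertibly on the (co)homology of $X$ through the whole range of degrees, which forces $\deg X=1$; both use the homology equivalence $r_Y$ to shuttle between $X$ and $C(Y)$, and both use the hypothesis that $X\setminus Y$ is a $\z$-homology ball (via Lefschetz duality and the long exact sequence of the pair) to identify $H_*(Y)\to H_*(X)$ as an isomorphism below the top degree. Where you diverge is in how the cone structure is exploited: the paper stays in homology and factors $\cap L\colon H_{i+2}(C(Y),\z)\to H_i(C(Y),\z)$ as the Gysin map to $Y$ followed by the inclusion, invoking the ``cone over a cycle'' isomorphism $H_i(Y,\z)\cong H_{i+2}(C(Y),\z)$ whose inverse is the Gysin map; it never computes $H_*(Y)$ and never mentions the link of the vertex. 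You instead deduce that the link $L$ of the vertex is a $\z$-homology $(2n-1)$-sphere and run the Gysin sequence of the circle bundle $L\to Y$ (the unit bundle of $\o_Y(-1)$, which is a genuine principal $S^1$-bundle even over singular $Y$ since the $\c^*$-action on the punctured affine cone is free -- so the point you flag as delicate is indeed harmless), obtaining the full computation $H^{2j}(Y,\z)=\z\cdot (h|_Y)^j$ with vanishing odd cohomology, and you close with unimodularity of the Poincar\'e pairing on $X$ rather than by composing the isomorphisms $(\cap L)^{\dim X}$. The paper's route is shorter and sidesteps bundles over singular bases; yours replaces the unproved ``cone over a cycle'' isomorphism by the standard Gysin sequence and yields the stronger conclusion that $X$ and $Y$ have the integral cohomology of projective spaces in the relevant range. (You implicitly assume $\dim X\geq 2$ in a few places, e.g.\ $H^1(L)=0$; the case $\dim X=1$ is immediate since a smooth curve with $X\setminus Y$ a $\z$-homology ball must be a line.)
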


Proof. 
Let $L\in H^2\bigl(\p^N, \z\bigr)$ denote the hyperplane class.
We will prove that cap product with $L$ gives 
isomorphisms 
$$
\cap L: H_{i+2}(X,\z)\cong H_i(X,\z)\qtq{for $0\leq i\leq 2\dim Y$.}
\eqno{(\ref{def.to.cone.prop}.1)}
$$
Composing the even ones gives an isomorphism
$$
\bigl(\cap L\bigr)^{\dim X}:H_{2\dim X}(X,\z)\cong H_0(X,\z).
$$
Thus $\deg X=1$ and so $X$ is a linear subspace.

Since $r_Y$ is a $\z$-homology equivalence, (\ref{def.to.cone.prop}.1)
is equivalent to
$$
\cap L: H_{i+2}\bigl(C(Y),\z\bigr)\cong H_i\bigl(C(Y),\z\bigr)
\qtq{for $0\leq i\leq 2\dim Y$.}
\eqno{(\ref{def.to.cone.prop}.2)}
$$
This map can be factored as the Gysin map
$H_{i+2}\bigl(C(Y),\z\bigr)\to H_i\bigl(Y,\z\bigr)$
followed by the inclusion map
$H_i\bigl(Y,\z\bigr)\to H_i\bigl(C(Y),\z\bigr)$.
 
Taking the cone over a cycle gives a
natural isomorphism
$H_i(Y,\z)\cong H_{i+2}\bigl(C(Y), \z\bigr)$
and the  Gysin map is its inverse.
Again using that $r_Y$ is a $\z$-homology equivalence,
$H_i\bigl(Y,\z\bigr)\to H_i\bigl(C(Y),\z\bigr)$
is isomorphic to the inclusion map
$H_i\bigl(Y,\z\bigr)\to H_i\bigl(X,\z\bigr)$.
The latter is an isomorphism for $i\leq 2\dim Y$ 
since $X\setminus Y$ is a $\z$-homology ball.
This shows (\ref{def.to.cone.prop}.1).
 \qed

\subsection*{Families of cubic hypersurfaces}{\ }

In \cite{Bo1} several families with constant L\^e numbers and
 non-constant topology are produced. One of them is a 
family of homogeneous polynomials, giving 
 examples of homotopy fiber bundles which are not locally trivial 
topologically. All the examples in  \cite{Bo1} are non-normal
but here we construct a normal variant.
 Notice that all these examples
belong to a class of non-isolated singularities 
that has been studied systematically in \cite{BoMa}.

\begin{exmp}\label{main.counter.exmpp}
Consider the family of homogeneous cubic polynomials
\[
f_t(x_1,x_2,x_3, y_1,y_2,y_3):=(y_1,y_2,y_3)\cdot\left(
\begin{array}{ccc}  tx_1 & x_2 & x_3 \\ 
x_2 & tx_3 & x_1 \\ x_3 & x_1 & tx_2 \end{array}\right) \cdot 
\left(\begin{array}{c} y_1 \\ y_2 \\ y_3 \end{array} \right).
\] 
Set $F(t,{\mathbf x}, {\mathbf y})=f_t({\mathbf x}, {\mathbf y})$
and $C_0:=\CC_t\setminus\{0,-2,-2\xi,-2\xi^2\}$ where
  $\xi$ is a third root of unity.
Consider the family of cubic hypersurfaces
$$
X:=\bigl(F(t,{\mathbf x}, {\mathbf y})=0\bigr)
\subset \p^5_{{\mathbf x}, {\mathbf y} }\times C_0
$$
and let $\pi:X\to C_0$ be the second projection. 
For $t\in C_0$ the fiber $\pi^{-1}(t)$ is denoted by
$$
X_t=\bigl(f_t({\mathbf x}, {\mathbf y})=0\bigr)
\subset \p^5_{{\mathbf x}, {\mathbf y} }.
$$
We claim that $\pi:X\to C_0$
has the following properties.
\begin{enumerate}
\item   The singular set of $X_t$ is the 2-plane
$(y_1=y_2=y_3=0)$ for every $t\in C_0$.  Furthermore, $X_t$ is normal
and has only  canonical singularities.
\item $\pi:X\to C_0$
is a homotopy fiber bundle.
\item $\pi:X\to C_0$  is not topologically locally trivial in any 
neighborhood of  $t$ if 
 $\xi't^3-3t+2\xi'=0$ for some  third root of unity $\xi'$.
 (For example $t=1$ is one such value.)
\end{enumerate}
\end{exmp}
\begin{proof}
 The 2-plane $P:=(y_1=y_2=y_3=0)$ is clearly contained in $\sing X_t$.
If we project $X_t$ from $P$, the fibers are linear spaces.
By an explicit computation we see that $C_0$ was chosen such that
the fibers are all 2-dimensional. So $X_t\setminus P$ is a
rank 2 vector bundle over $\p^2$, hence smooth. This implies that $X_t$ is 
smooth  in codimension $1$, hence normal.

The projection shows that $X_t$ has a resolution $r_t:\bar X_t\to X_t$ where
$\bar X_t$ is a $\p^2$-bundle over $\p^2$. The exceptional divisor
$E_t\subset \bar X_t$ is a $\p^1$-bundle over $\p^2$ but the restriction of $r_t$
gives a conic bundle structure
$r_t|_{E_t}: E_t\to P$. Corresponding to the fibers of this conic bundle,
  $P=\sing X_t$ is   stratified according to the 
rank of the matrix
\[\left(
\begin{array}{ccc}  tx_1 & x_2 & x_3 \\ 
x_2 & tx_3 & x_1 \\ x_3 & x_1 & tx_2 \end{array}\right).\] 

 The third assertion follows from this and from 
the proof of \cite[Prop.7]{Bo1} almost word by word.
It is not worth to reproduce it, but  the key idea
 is that any homeomorphism between $X_s$ and $X_t$ carries the 
singular set of $X_s$ to the
singular set of $X_t$ and preserves the stratification. 
For generic $t$ the locus of non-maximal rank is a smooth 
cubic curve but  for $t=1$ it is a singular cubic curve.

For the second assertion we check, by a direct computation,
 the conditions of Lemmas~\ref{basiclemma} and \ref{basiclemma2}.
Alternatively, comparing the homology sequences of the pairs
$\bigl(\bar X_t, E_t\bigr)$ and $\bigl( X_t, P\bigr)$
shows that $\pi:X\to C_0$
is a $\z$-homology fiber bundle.
\end{proof}

We follow the ideas of~\cite{Bo1}.
Let $f_t:(\CC^n,0)\to(\CC,0)$ be a family of holomorphic function germs depending holomorphically on a parameter. 
Define $F:\CC^n\times\CC$ by $F(x,t):=f_t(x)$. Consider the projection $\pi:\CC^n\times\CC$ to the second factor.
Let $B_\epsilon$ be the closed ball of radius $\epsilon$ centered at the origin of $\CC^n$, let $\SSS_\epsilon$ be
its boundary sphere and let $D_\delta$ be the disk of 
radius $\delta$ centered at $0$. Denote the punctured disk by $D^*_\delta$.

\begin{lem}
\label{basiclemma}
Let $\epsilon$, $\delta$ and $\eta$ be radii such that for any $t\in D_\eta$ the restriction 
$$f_t:B_\epsilon\cap f_t^{-1}(D_\delta^*)\to D_\delta^*$$
is a locally trivial fibration. Then the following restrictions of the projection mapping are homotopy fiber bundles:
$$\pi:B_\epsilon\times D_\eta\cap F^{-1}({0}\times D_\eta)\to D_\eta,$$
$$\pi:\SSS_\epsilon\times D_\eta\cap F^{-1}({0}\times D_\eta)\to D_\eta.$$
\end{lem}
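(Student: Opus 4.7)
The strategy is to fix $t_0 \in D_\eta$, pass to a sufficiently small disc $U \ni t_0$, and exhibit a strong deformation retraction of each of the two total spaces onto the corresponding fiber over $t_0$. Since a fiber inclusion that admits such a retraction is automatically a homotopy equivalence, this directly verifies Definition~\ref{homot.fibbund.defn}.

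I would treat the ball assertion first, as it is essentially formal. For each $t \in D_\eta$ the fiber $V_t := F^{-1}(0) \cap (B_\epsilon \times \{t\})$ is an analytic set through the origin contained in a uniform Milnor ball, so by Lojasiewicz's conic structure theorem $V_t$ is contractible to $(0,t)$. Parameterizing this contraction in $t$ --- for instance, by lifting the radial Euler vector field on $\CC^n$ to a Whitney stratification of $F^{-1}(0) \cap (B_\epsilon \times D_\eta)$ --- yields a deformation retraction of $\pi^{-1}(U)$ onto $\{0\} \times U$ for $U$ small, which further retracts to $\{(0,t_0)\}$. Thus $\pi^{-1}(U)$ is contractible; since $V_{t_0}$ is also contractible and both have the homotopy type of CW complexes (by triangulability of real-analytic sets), the inclusion $V_{t_0} \hookrightarrow \pi^{-1}(U)$ is a homotopy equivalence.

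The sphere assertion is more subtle because the fibers $K_t := F^{-1}(0) \cap (\SSS_\epsilon \times \{t\})$, the links, are generally not contractible. Here the local triviality hypothesis enters essentially: it forces $F^{-1}(0)$ to meet each slice $\SSS_\epsilon \times \{t\}$ transversely in the stratified sense, uniformly in $t$, and hence ensures the existence of a Whitney stratification of $\SSS_\epsilon \times D_\eta$ compatible with $F^{-1}(0) \cap (\SSS_\epsilon \times D_\eta)$ such that the projection $\pi$ is a submersion on every stratum. Thom's first isotopy theorem then yields a local topological trivialization of $\pi$ over $U$, which in particular produces the desired deformation retraction of $\pi^{-1}(U)$ onto $K_{t_0}$.

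The main obstacle is the construction of the Whitney stratification in the sphere case with the required submersion property. This is where the uniformity of the Milnor radii across $D_\eta$ is indispensable: it prevents strata from collapsing or changing qualitatively as $t$ varies, so that a single stratification works uniformly. I would obtain this stratification by pulling back to $B_\epsilon \times D_\eta$ a Whitney stratification compatible with $F^{-1}(D_\delta^*)$, $F^{-1}(0)$ and $\SSS_\epsilon \times D_\eta$, produced from the hypothesis via the standard Verdier or Thom--Mather machinery, and then restricting to the boundary sphere. Once this is in place, Thom's isotopy theorem actually yields local topological triviality of $\pi$, which is stronger than the homotopy fiber bundle property claimed.
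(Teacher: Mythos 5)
Your sphere-case argument cannot work, because it proves too much. You claim that the hypothesis yields a Whitney stratification of $F^{-1}(\{0\}\times D_\eta)\cap(\SSS_\epsilon\times D_\eta)$ on which $\pi$ is a stratified submersion, and then invoke Thom's first isotopy lemma to conclude local \emph{topological} triviality. But the hypothesis only constrains the nearby fibers $f_t^{-1}(s)$ for $s\neq 0$; it says nothing about equisingularity of the central fibers $f_t^{-1}(0)$ along their singular loci. In Example \ref{main.counter.exmpp} (and in the examples of \cite{Bo1}) the hypothesis of Lemma \ref{basiclemma} is satisfied, yet the family of zero sets is \emph{not} topologically equisingular near the special parameter values: any Whitney stratification of $F^{-1}(\{0\}\times D_\eta)$ compatible with the singular loci must contain strata lying over isolated values of $t$ (where the degeneracy cubic becomes singular), and on such strata $\pi$ is not a submersion onto $D_\eta$. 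So the stratification you need does not exist, and the statement you would deduce from it is false --- the whole point of Lemmas \ref{basiclemma}--\ref{basiclemma2} is to obtain the homotopy fiber bundle property precisely in situations where topological local triviality fails. The ball case has a related gap: contractibility of $V_t=f_t^{-1}(0)\cap B_\epsilon$ via the conic structure theorem requires every sphere $\SSS_{\epsilon'}$, $\epsilon'\leq\epsilon$, to meet $f_t^{-1}(0)$ transversally (a Milnor-ball condition for the zero set, uniform in $t$), which is not part of the hypothesis; the same issue blocks your lift of the radial vector field.

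The paper's proof sidesteps the central fibers entirely by a sandwich argument through the thickened tubes. By the Ehresmann fibration theorem, $F^{-1}(D_\delta\times D_\eta)\cap(B_\epsilon\times D_\eta)\to D_\eta$ and $F^{-1}(D_\delta\times D_\eta)\cap(\SSS_\epsilon\times D_\eta)\to D_\eta$ are differentiable locally trivial fibrations --- this uses only the stated hypothesis on the punctured tubes. On the other hand, the local triviality of $f_t$ over $D_\delta^*$ implies that each central fiber $f_t^{-1}(0)\cap B_\epsilon$ (resp.\ $\cap\,\SSS_\epsilon$) includes into its tube $f_t^{-1}(D_\delta)\cap B_\epsilon$ (resp.\ $\cap\,\SSS_\epsilon$) as a homotopy equivalence, and likewise the zero-set family over $D_\xi$ includes into the tube family over $D_\xi$ as a homotopy equivalence. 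Two-out-of-three then gives that the fiber inclusions into the zero-set family are homotopy equivalences, which is exactly the homotopy fiber bundle property. You should reorganize both cases along these lines; no structure theory of the sets $f_t^{-1}(0)$ themselves is needed or available.
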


{\it Proof.}
The condition implies that for any $t\in D_\eta$ the inclusions of $f_t^{-1}(0)\cap B_\epsilon$ in 
$f_t^{-1}(D_\delta)\cap B_\epsilon$ and of $f_t^{-1}(0)\cap\SSS_\epsilon$ in 
$f_t^{-1}(D_\delta)\cap\SSS_\epsilon$ are homotopy equivalences. The condition also implies that for any 
$\xi\leq\eta$ the inclusions 
of $F^{-1}({0}\times D_\eta)\cap (B_\epsilon\times D_\xi)$ in 
$F^{-1}(D_\delta\times D_\eta)\cap (\SSS_\epsilon\times D_\xi)$ and of $F^{-1}({0}\times D_\eta)\cap (\SSS_\epsilon\times D_\xi)$ in 
$F^{-1}(D_\delta\times D_\eta)\cap (B_\epsilon\times D_\xi)$
are homotopy equivalences.

The condition and Ehresmann Fibration Theorem implies that the following restrictions of the projection mapping are 
differentiable locally trivial fibrations:
$$\pi:B_\epsilon\times D_\eta\cap F^{-1}(D_\delta\times D_\eta)\to D_\eta,$$
$$\pi:\SSS_\epsilon\times D_\eta\cap F^{-1}(D_\delta\times D_\eta)\to D_\eta.\qed$$

Usually one checks the condition of the previous Lemma  by showing, 
for any $t\in D_\eta$, that in the ball 
$B_\epsilon$ the function $f_t$ has no critical points outside $f_t^{-1}(0)$
 and that the fibers $f_t^{-1}(s)$ are transverse
to $\partial B_\epsilon$ for any $s\in D_\delta\setminus\{0\}$.

The Lemma above helps in the local case. From it one can
 deduce that certain projective morphisms are 
homotopy fiber bundles. Suppose that $f_t$ is a family of homogeneous
polynomials. Let $V(F)\subset \PP^{n-1}\times D_\eta$ be the family of projective varieties defined by the zeros of $F$.
Denote by $\pi$ the projection of $\PP^{n-1}\times D_\eta$ to the second factor.

\begin{lem}\label{basiclemma2}
Suppose that the condition of the previous lemma is satisfied, and in addition that $f_t$ is a family of homogeneous
polynomials. Then the restriction of the projection
$$\pi:V(F)\to D_\eta$$
is a homotopy fiber bundle. 
\end{lem}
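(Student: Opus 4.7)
My plan is to realize the family $\pi:V(F)\to D_\eta$ as the quotient of the link family from Lemma~\ref{basiclemma} by a fiberwise free $S^1$-action, and then transfer the homotopy-bundle structure through this quotient.

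Since each $f_t$ is homogeneous of positive degree, the scalar $S^1\subset\CC^*$ action on $\CC^n$ preserves both $\SSS_\epsilon$ and $F^{-1}(\{0\}\times D_\eta)$, and it commutes with the projection to $D_\eta$. Hence $S^1$ acts smoothly, freely, and fiberwise over $D_\eta$ on
\[
E:=(\SSS_\epsilon\times D_\eta)\cap F^{-1}(\{0\}\times D_\eta),
\]
and the Hopf projection $\SSS_\epsilon\to\PP^{n-1}$ identifies the quotient $E/S^1$ with $V(F)$ as a space over $D_\eta$. In particular, $E\to V(F)$ is a principal $S^1$-bundle; restricting over $t\in D_\eta$ one recovers the familiar bundle $L_t:=\SSS_\epsilon\cap f_t^{-1}(0)\to V(f_t)$ coming from homogeneity.

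By Lemma~\ref{basiclemma} the projection $\pi:E\to D_\eta$ is a homotopy fiber bundle. Fix $t_0\in D_\eta$ and a sufficiently small neighborhood $U$ of $t_0$ so that the inclusion $L_{t_0}=E_{t_0}\hookrightarrow E|_U$ is a homotopy equivalence. I would then compare the long exact sequences of homotopy groups of the two principal $S^1$-fibrations in the commutative ladder
\[
\begin{CD}
S^1 @>>> L_{t_0} @>>> V(f_{t_0}) \\
@| @VVV @VVV \\
S^1 @>>> E|_U @>>> V(F)|_U
\end{CD}
\]
The left column is the identity and the middle column induces isomorphisms on all $\pi_*$, so the five lemma forces the right column to be a weak homotopy equivalence as well. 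Since complex analytic spaces are triangulable and hence have the homotopy type of CW complexes, Whitehead's theorem upgrades this to a genuine homotopy equivalence, verifying Definition~\ref{homot.fibbund.defn} for $\pi:V(F)\to D_\eta$.

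The main subtlety is ensuring that $E|_U\to V(F)|_U$ is genuinely a Serre fibration (so that the long exact sequence applies) even over points where $V(f_t)$ is singular. This is automatic because the scalar $S^1$-action on $\SSS_\epsilon$ avoids the origin and is therefore smooth, free and proper regardless of any singularities of the projective fibers, making $E\to V(F)$ a topological principal $S^1$-bundle. An alternative, slicker route that avoids invoking Whitehead is to average the horizontal vector field used in the Ehresmann argument of Lemma~\ref{basiclemma} over the compact group $S^1$; the resulting $S^1$-invariant flow descends to local trivializations of $V(F)\to D_\eta$, making it topologically locally trivial, which is a fortiori a homotopy fiber bundle.
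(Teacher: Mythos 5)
Your proposal is correct and follows essentially the same route as the paper: both exploit the fiberwise free $S^1$-action coming from homogeneity, compare the long exact homotopy sequences of the two $S^1$-fibrations (link family over the projective family), and conclude via the five lemma and Whitehead's theorem. The only cosmetic difference is that you verify the homotopy-equivalence condition at an arbitrary basepoint $t_0$ with a small neighborhood, while the paper phrases it for the inclusion $V(f_0)\hookrightarrow V(F)\cap\pi^{-1}(D_\xi)$.
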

\begin{proof}
It is enough to prove that for any $\xi\leq \eta$ the inclusion of 
\begin{equation}
\label{inclusion}
V(f_0)\hookrightarrow V(F)\cap\pi^{-1}(D_\xi)
\end{equation}
is a homotopy equivalence. 
By the previous lemma we know that 
$$\pi:\SSS_\epsilon\times D_\eta\cap F^{-1}({0}\times D_\eta)\to D_\eta$$ is a homotopy fiber bundle. Therefore 
the inclusion 
$$F^{-1}(0,0)\cap (\SSS_\epsilon\times\{0\})\hookrightarrow F^{-1}({0}\times D_\eta)\cap (\SSS_\epsilon\times D_\xi)$$
is a homotopy equivalence for any $\xi\leq\eta$. Thus the induced homomorphisms of homotopy groups are isomorphisms.

There is an free action of the sphere $\SSS^1$ of complex numbers of modulus $1$
which is equivariant with respect to the inclusion whose quotient is the inclusion~(\ref{inclusion}). 
Applying the long exact sequence of homotopy groups associated to the
fibrations given by the quotients of the free action we conclude that the inclusion~(\ref{inclusion}) induces 
isomorphisms of homotopy groups. Whitehead's Theorem implies that then it is a homotopy equivalence.
\end{proof}

\begin{rem}
The proof of \cite[Thm.10]{Bo1} gives that if $B_\epsilon$ is a Milnor ball of $f_0$ and the L\^e numbers of $f_t$ with respect
to a prepolar coordinate system (a sufficiently generic coordinate system, 
see~\cite[p.26]{Ma} for a precise definition), then the condition in Lemma~\ref{basiclemma} is satisfied.
\end{rem}

\begin{ack} We thank N.~A'Campo, B.~Claudon,  A.~H\"oring, 
 M.~Marco-Buzun\'ariz,
 A.~N\'e\-me\-thi, J.~Pardon
and  J.~Wahl for useful comments, references and suggestions.
 JFdB was partially supported by the ERC Starting Grant project 
TGASS
and by Spanish Contract MICINN2010-2170-C02-01.
Partial financial support   to JK was provided by  the NSF under grant number 
DMS-0758275.
\end{ack}

\def\cprime{$'$} \def\dbar{\leavevmode\hbox to 0pt{\hskip.2ex \accent"16\hss}d}
  \def\cprime{$'$} \def\cprime{$'$}
  \def\polhk#1{\setbox0=\hbox{#1}{\ooalign{\hidewidth
  \lower1.5ex\hbox{`}\hidewidth\crcr\unhbox0}}} \def\cprime{$'$}
  \def\cprime{$'$} \def\cprime{$'$} \def\cprime{$'$}
  \def\polhk#1{\setbox0=\hbox{#1}{\ooalign{\hidewidth
  \lower1.5ex\hbox{`}\hidewidth\crcr\unhbox0}}} \def\cdprime{$''$}
  \def\cprime{$'$} \def\cprime{$'$} \def\cprime{$'$} \def\cprime{$'$}
\providecommand{\bysame}{\leavevmode\hbox to3em{\hrulefill}\thinspace}
\providecommand{\MR}{\relax\ifhmode\unskip\space\fi MR }
\providecommand{\MRhref}[2]{%
  \href{http://www.ams.org/mathscinet-getitem?mr=#1}{#2}
}
\providecommand{\href}[2]{#2}

\bigskip

\noindent Princeton University, Princeton NJ 08544-1000, USA

\begin{verbatim}kollar@math.princeton.edu\end{verbatim}

\bigskip 

\noindent ICMAT. CSIC-Complutense-UAM-Carlos III.
C/ Nicol\'as Cabrera, n. 13-15.

Campus Cantoblanco, UAM.
28049 Madrid.

\begin{verbatim}javier@icmat.es\end{verbatim}

\end{document}